\newtheorem{definition}{Definition}[section]
\newtheorem{lemma}{Lemma}[section]
\title{A Short Report on Importance Sampling for Rare Event Simulation in Diffusions}
\author{
Zhiwei Gao\\
Division of Applied Mathematics, Brown University\\
\texttt{zhiwei\_gao@brown.edu}
}
\date{}
\begin{document}

\maketitle
\begin{abstract}
    In this manuscript, we investigate importance sampling methods for rare-event simulation in diffusion processes. We show, from a large-deviation perspective, that the resulting importance sampling estimator is log-efficient. This connection is established via a stochastic optimal control formulation, and the associated Hamilton--Jacobi--Bellman (HJB) equation is derived using dynamic programming. To approximate the optimal control, we adopt a spectral parameterization and employ the cross-entropy method to estimate the parameters by solving a least-squares problem. Finally, we present a numerical example to validate the effectiveness of the cross-entropy approach and the efficiency of the resulting importance sampling estimator.
\end{abstract}

\section{Problem setup}
The problem here is to estimate \emph{quantities of interest} in the following form  \cite{vanden2012rare} 
\begin{equation}
    \rho = \mathbb{E}\!\left[e^{-\frac{1}{\epsilon}g(X^{\epsilon})}\right],
\end{equation}
where $g: \mathcal{C}([0, T]; \mathbb{R}^{d})\to \mathbb{R}$ is a bounded continuous functional and $X^{\epsilon}$ is the strong solution of the following SDE:
\begin{equation}
\begin{cases}
dX^\varepsilon(s)
  = b\big(X^\varepsilon(s), s\big)\,ds
    + \sqrt{\varepsilon}\,\sigma\big(X^\varepsilon(s), s\big)\,dW(s),
  & s \in [0,T], \\[4pt]
X^\varepsilon(0) = x_0, &
\end{cases}
\end{equation}
with a $d$-dimensional Brownian motion in some probability space $(\Omega, \mathcal{F}, \mathcal{F}_{t}, P)$ that satisfies the usual conditions. Typically, we will take $\Omega = \mathcal{C}([0, T]; \mathbb{R}^{d})$ in this manuscript. To ensure existence and uniqueness of the solution, the drift term $b: \mathbb{R}^{d}\times[0, T]\rightarrow \mathbb{R}^{d}$ and the diffusion term $\sigma: \mathbb{R}^{d}\times [0, T]\rightarrow \mathbb{R}^{d\times d}$ satisfy the corresponding conditions, i.e.,
    \begin{align}
|b(x,t)| + |\sigma(x,t)| &\le C(1 + |x|), \\
|b(x,t) - b(y,t)| + |\sigma(x,t) - \sigma(y,t)| &\le D |x - y|,
\end{align}
for some constant $C$ and $D$. Typically, we are interested in the cases where $\epsilon \ll 1$, which results in small perturbations of the diffusion and thus rare event simulation. 

\section{Monte Carlo simulation}
To approximate the value in (1), one common way is to use the Monte Carlo simulation, i.e, 
\begin{equation}
    \tilde{\rho}_{MC} \approx \frac{1}{N}\sum_{i=1}^{N}e^{-\frac{1}{\epsilon}g(X^{\epsilon}_{i})},
\end{equation}
by repeatedly solving the corresponding SDE. This estimator is unbiased, i.e, 
\begin{equation}
    \mathbb{E}[\tilde{\rho}_{MC}] = \rho.
\end{equation}
The variance is 
\begin{equation*}
    \operatorname{Var}(\tilde{\rho}_{MC})
= \frac{1}{N} \Big( \mathbb{E}\big[e^{-\tfrac{2}{\varepsilon} g(X^\varepsilon)}\big]
 - \rho^{2} \Big).
\end{equation*}
Thus, the corresponding coefficient of variation is 
\begin{equation}
    \delta(\tilde{\rho}_{MC}) = \frac{\sqrt{\operatorname{Var}(\tilde{\rho}_{MC})}}{\rho} = \frac{1}{\sqrt{N}}
\sqrt{
    \frac{\mathbb{E}\big[e^{-\tfrac{2}{\varepsilon} g(X^\varepsilon)}\big]}
         {\big(\mathbb{E}\big[e^{-\tfrac{1}{\varepsilon} g(X^\varepsilon)}\big]\big)^2}
    - 1
}.
\end{equation}
The MC estimator seems to be very efficient when $N$ increases. However, according to Varadhan’s lemma, under some general conditions, we have 
\begin{align}
\lim_{\varepsilon \to 0} \varepsilon 
    \log \mathbb{E}\bigl[e^{-\tfrac{1}{\varepsilon} g(X^\varepsilon)}\bigr]
&= - \inf_{\substack{\varphi \in \mathcal{C}([0,T];\mathbb{R}^{d}) \\ \varphi(0)=x_0}}
    \{ I(\varphi) + g(\varphi) \}
:= -\gamma_1, \\[6pt]
\lim_{\varepsilon \to 0} \varepsilon 
    \log \mathbb{E}\bigl[e^{-\tfrac{2}{\varepsilon} g(X^\varepsilon)}\bigr]
&= - \inf_{\substack{\varphi \in \mathcal{C}([0,T];\mathbb{R}^{d}) \\ \varphi(0)=x_0}}
    \{ I(\varphi) + 2 g(\varphi) \}
:= -\gamma_2,
\end{align}
where $\mathcal{C}([0,T];\mathbb{R}^{d})$ is the set of all \emph{continuous} functions from $[0, T]$ to $\mathbb{R}^{d}$. 
In particular, the rate functional $I(\varphi)$ is finite only for absolutely continuous paths $\varphi\in \mathcal{AC}([0,T];\mathbb{R}^d)$; for non-$AC$ paths we set $I(\varphi)=+\infty$.
And $I(\varphi)$ is the rate functional for the process $X^{\epsilon}$ defined by 
\begin{equation}
    I(\varphi)
    =
    \begin{cases}
    \displaystyle
    \inf_{\substack{u \in L^2([0,T];\mathbb{R}^d)\\
                      \dot{\varphi}(t)= b+\sigma\,u}}
        \int_0^T \frac{1}{2}\,|u(t)|^2\,\mathrm{d}t,
        & \varphi\in AC([0,T];\mathbb{R}^d),\\[2ex]
        +\infty, & \text{otherwise.}
    \end{cases}
\end{equation}
Based on Jensen's inequality, we have $\gamma_{2} \leq 2\gamma_{1}$. And we can write 
\begin{equation}
\begin{split}
    \mathbb{E}\bigl[e^{-\tfrac{1}{\varepsilon} g(X^\varepsilon)}\bigr] &= e^{-\frac{\gamma_{1} + o(1)}{\epsilon}},\\ 
    \mathbb{E}\bigl[e^{-\tfrac{2}{\varepsilon} g(X^\varepsilon)}\bigr] &= e^{-\frac{\gamma_{2} + o(1)}{\epsilon}}.
\end{split}
\end{equation}
Thus, we have 
\begin{equation}
    \delta(\tilde{\rho}_{MC}) = \frac{1}{\sqrt{N}}\sqrt{e^{\frac{2\gamma_{1}-\gamma_{2} + o(1)}{\epsilon}}-1}.
\end{equation}
It is easy to discover that when $\epsilon$ approaches 0, the number of samples $N$ needs to grow exponentially to maintain the error. A desirable situation is that $\gamma_{2} = 2\gamma_{1}$, in which case the leading exponential growth in the coefficient of variation cancels out. But to let $\delta(\tilde{\rho}_{MC})\rightarrow 0$ when $\epsilon\to 0$, we still need to erase second order terms, which is quite difficult. Nevertheless, to let $\gamma_{2} = 2\gamma_{1}$, we need to adopt importance sampling to control the second order moment, which will be introduced in the following section.

\section{Importance sampling}

To implement importance sampling, we denote by $\mathbb{P}^\varepsilon$ the path measure induced by (2) on $\Omega=\mathcal{C}([0,T];\mathbb{R}^d)$. 
Suppose the proposal measure used for importance sampling is
$\mathbb{Q}\ll \mathbb{P}^{\epsilon}$ and set
$
Z = \frac{d\mathbb{P}^\varepsilon}{d\mathbb{Q}}.
$
Then the importance sampling estimator for (1) is \cite{zhang2014applications}
\begin{equation}
  \tilde{\rho}_{\mathrm{IS}}
  = \frac{1}{N} \sum_{i=1}^N
      e^{-\tfrac{1}{\varepsilon} g(\tilde X_i^\varepsilon)}\, Z(\tilde X_i^\varepsilon),
  \label{eq:IS-estimator}
\end{equation}
where $\{\tilde X_i^\varepsilon\}_{i=1}^N$ are i.i.d.\ samples drawn from $\mathbb{Q}$. The importance sampling estimator is also unbiased and the variance is given by 
\begin{equation}
\begin{split}
\operatorname{Var}(\tilde{\rho}_{\mathrm{IS}})
= \frac{1}{N}\left(
    \mathbb{E}_{\mathbb{Q}}\Big[
        e^{-\tfrac{2}{\varepsilon} g(\tilde X^\varepsilon)}
        Z(\tilde X^\varepsilon)^2
    \Big]
    - \rho^2
\right).
\end{split}
\end{equation}
The error is given by 
\begin{equation}
\delta(\tilde{\rho}_{\mathrm{IS}})
= \frac{\sqrt{\operatorname{Var}(\tilde{\rho}_{\mathrm{IS}})}}{\rho}
= \frac{1}{\sqrt{N}}
  \sqrt{
    \frac{
      \mathbb{E}_{\mathbb{Q}}\big[
        e^{-\tfrac{2}{\varepsilon} g(\tilde X^\varepsilon)}
        Z(\tilde X^\varepsilon)^2
      \big]
    }{
      \Big(
        \mathbb{E}_{\mathbb{Q}}\big[
          e^{-\tfrac{1}{\varepsilon} g(\tilde X^\varepsilon)}
          Z(\tilde X^\varepsilon)
        \big]
      \Big)^2
    }
    - 1 }.
\end{equation}
Actually, based on Jensen's inequality,
\begin{equation}
\begin{aligned}
\limsup_{\varepsilon \to 0}
    -\varepsilon \log \mathbb{E}_{\mathbb{Q}}\Big[
        e^{-\tfrac{2}{\varepsilon} g(\tilde X^\varepsilon)}
        Z(\tilde X^\varepsilon)^{2}
    \Big]
&\le
2 \lim_{\varepsilon \to 0}
    -\varepsilon \log \mathbb{E}\Big[
        e^{-\tfrac{1}{\varepsilon} g(X^\varepsilon)}
    \Big] \\
&= 2\gamma_{1}.
\end{aligned}
\end{equation}
Based on this, we may find some measure $\mathbb{Q}$ such that the equality holds and then the IS estimator is log-efficient.
\begin{definition}
   Let the ratio be 
   \[
   R(\epsilon) =
   \frac{
      \mathbb{E}_{\mathbb{Q}}\big[
        e^{-\tfrac{2}{\varepsilon} g(\tilde X^\varepsilon)}
        Z(\tilde X^\varepsilon)^2
      \big]
    }{
      \Big(
        \mathbb{E}_{\mathbb{Q}}\big[
          e^{-\tfrac{1}{\varepsilon} g(\tilde X^\varepsilon)}
          Z(\tilde X^\varepsilon)
        \big]
      \Big)^2}.
      \]
      The importance sampling estimator is log-efficient if
      \begin{equation}
          \limsup_{\epsilon\to 0}\ \epsilon \log R(\epsilon) = 0.
      \end{equation}
\end{definition}
    
\section{Importance sampling and stochastic optimal control}
The following task is to construct an optimal proposal measure $\mathbb Q$ that
minimizes the variance of the estimator. In this manuscript, we restrict
attention to functionals $g$ that depend only on the terminal state
$X_T^\varepsilon$, i.e., $g = g(X_T^\varepsilon)$. Moreover, for simplicity, we consider one-dimensional cases, where higher dimensions are similar. In this case, the zero-variance
change of measure satisfies
\begin{equation}\label{eq:zero-variance-density}
  \frac{d\mathbb Q^*}{d\mathbb P^\varepsilon}
    = \frac{e^{-g(X_T^\varepsilon)/\varepsilon}}{\rho}
    =: L_T^* .
\end{equation}
Under $\mathbb{Q}^*$, the corresponding IS weight is $Z^*=\frac{d\mathbb{P}^\varepsilon}{d\mathbb{Q}^*}=(L_T^*)^{-1}$ and the estimator has zero variance.

Define $L_t := \mathbb E[L_T^* \mid \mathcal F_t]$ for $0 \le t \le T$. Then
$(L_t)_{0\le t\le T}$ is a positive martingale with $L_0 = 1$ by Doob's construction. By the martingale
representation theorem, there exists a progressively measurable process $u^{*}$
such that
\begin{equation}\label{eq:Z-exponential}
  L_t
  = \exp\Bigg(
      -\frac{1}{\sqrt{\epsilon}}\int_0^t u_s^{*} \, dW_s
      - \frac{1}{2\epsilon} \int_0^t \lvert u_s^{*} \rvert^2 ds
    \Bigg), \qquad 0 \le t \le T.
\end{equation}
Hence, by Girsanov's theorem, finding the optimal proposal measure
$\mathbb Q^*$ is equivalent to finding an optimal drift control $u$. Define the control space to be 
\begin{equation}
   \mathcal U
:= \Big\{ u:\Omega\times [0, T]\to\mathbb R^d  \,\Big|\,
u \text{ is progressively measurable w.r.t. } \mathcal F_t,
\ \mathbb E\!\int_0^T |u(s)|^2 ds <\infty \Big\}.
\end{equation}
Then the stochastic optimal control (risk-sensitive) representation is given by 
\begin{equation}
    V(x, t) = \inf_{u\in \mathcal{U}}\mathbb E_{\mathbb{P}^\varepsilon}\!\left[\exp\!\left(- \tfrac{2}{\varepsilon} g(\hat{X}_T^\varepsilon)
  +\tfrac{1}{\varepsilon}\int_t^T |u_s(\hat{X}_s^\varepsilon)|^{2}\,ds\right)\Big|\, \hat{X}_t^\varepsilon = x\right].
\end{equation}
To justify the above representation, we need to introduce the following lemma:

\begin{lemma} \cite{dupuis2012importance} Let $X^{\epsilon}$ be the solution of (2). Given $u\in \mathcal{U}$, the induced measure $\mathcal{Q}$ is given by Girsonov's theorem, i.e., \begin{equation} \frac{d\mathbb{Q}}{d\mathbb{P}} = \exp\left(\frac{1}{\sqrt{\epsilon}}\int_{t}^{T}u_{s}dW_{s} - \frac{1}{2\epsilon}\int_{t}^{T}|u_{s}|^{2}ds\right), \end{equation} and $B_{s} = W_{s} - \frac{1}{\sqrt{\epsilon}}\int_{t}^{s}u_{r}dr$ is a standard Brownian motion under $\mathbb{Q}$. Similarly, we introduce the control $-u$ and denote the induced measure as $\mathbb{Q}^{-u}$, where the new standard Brownian motion is given by $B^{-u}_{s} = W_{s} + \frac{1}{\sqrt{\epsilon}}\int_{t}^{s}u_{r}dr$. Then we have \begin{equation} E_{\mathbb{Q}}\bigl[e^{- \tfrac{2}{\varepsilon} g(X_T^\varepsilon)} Z_T^2 \,\big|\, X_t^\varepsilon = x\bigr] = \mathbb{E}_{\mathbb{P}}[\exp(-\frac{2}{\epsilon}g(\hat{X}^{\epsilon}_{T})+\frac{1}{\epsilon}\int_{t}^{T}|u_{s}(\hat{X}^{\epsilon}_{s})|^{2}ds)|\hat{X}^{\epsilon}_{t} = x], \end{equation} where $\hat{X}^{\epsilon}_{s}$ is the solution of \begin{equation} d\hat{X}^{\epsilon}_{s} = (b(\hat{X}^{\epsilon}_{s}, s) - \sigma(\hat{X}^{\epsilon}_{s} ,s)u_{s}(\hat{X}^{\epsilon}_{s} ))ds + \sqrt{\epsilon}\sigma(\hat{X}^{\epsilon}_{s} , s) dW_{s}, t\leq s\leq T. \end{equation} with initial condition $\hat{X}^{\epsilon}_{s} = x$ . \end{lemma}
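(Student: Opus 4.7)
The plan is a two-step change of measure: first push the $\mathbb{Q}$-expectation on the left-hand side back to $\mathbb{P}$ by multiplying by $d\mathbb{Q}/d\mathbb{P}$, then reinterpret the remaining exponential factor as the Radon--Nikodym derivative of the second measure $\mathbb{Q}^{-u}$ introduced in the statement, at which point the original $X^{\epsilon}$ viewed under $\mathbb{Q}^{-u}$ becomes the process $\hat X^{\epsilon}$ viewed under $\mathbb{P}$ by pathwise uniqueness of the SDE.

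Concretely, I would write $M_T := d\mathbb{Q}/d\mathbb{P}$ for the exponential martingale in the statement, so that $Z_T = M_T^{-1}$ and $Z_T^{2}\,M_T = Z_T$. The abstract Bayes rule then gives
\[
E_{\mathbb{Q}}\bigl[e^{-2g(X_T^{\epsilon})/\epsilon}\, Z_T^{2} \,\big|\, X_t^{\epsilon}=x\bigr] \;=\; E_{\mathbb{P}}\bigl[e^{-2g(X_T^{\epsilon})/\epsilon}\, Z_T \,\big|\, X_t^{\epsilon}=x\bigr].
\]
Next I would split the explicit form of $Z_T$ into a deterministic-looking exponential times the $-u$ Girsanov density,
\[
Z_T \;=\; \exp\!\Big(-\tfrac{1}{\sqrt{\epsilon}}\!\int_{t}^{T} u_s\, dW_s + \tfrac{1}{2\epsilon}\!\int_{t}^{T} |u_s|^{2}\, ds\Big) \;=\; \exp\!\Big(\tfrac{1}{\epsilon}\!\int_{t}^{T} |u_s|^{2}\, ds\Big)\, \frac{d\mathbb{Q}^{-u}}{d\mathbb{P}},
\]
which turns the previous $\mathbb{P}$-expectation into
\[
E_{\mathbb{Q}^{-u}}\!\Big[e^{-2g(X_T^{\epsilon})/\epsilon}\, \exp\!\big(\tfrac{1}{\epsilon}\!\int_{t}^{T} |u_s|^{2}\, ds\big)\,\Big|\, X_t^{\epsilon}=x\Big].
\]
By the Girsanov identification already recorded in the lemma, under $\mathbb{Q}^{-u}$ the process $X^{\epsilon}$ obeys $dX = (b-\sigma u)\,ds + \sqrt{\epsilon}\,\sigma\, dB^{-u}$ driven by the $\mathbb{Q}^{-u}$-Brownian motion $B^{-u}$, which is exactly the SDE defining $\hat X^{\epsilon}$ under $\mathbb{P}$; pathwise uniqueness from the Lipschitz hypothesis on $b,\sigma$ therefore yields equality in law and rewrites the expectation as the desired $\mathbb{P}$-expectation on the right-hand side.

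The main technical obstacle is justifying that $M_T$ (and symmetrically $d\mathbb{Q}^{-u}/d\mathbb{P}$) is a true martingale rather than merely a local martingale, so that $\mathbb{Q}$ and $\mathbb{Q}^{-u}$ are genuine probability measures and Bayes' rule applies. This typically requires a Novikov-type integrability bound such as $\mathbb{E}\exp\!\big(\tfrac{c}{\epsilon}\!\int_{t}^{T} |u_s|^{2}\, ds\big) < \infty$ for a suitable constant $c$, which does not follow from $u\in\mathcal{U}$ alone; in practice one either restricts to sufficiently integrable feedback controls or proceeds by localization via stopping times $\tau_n \uparrow T$ and passes to the limit using boundedness of $g$ to keep the integrand uniformly controlled.
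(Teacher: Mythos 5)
Your proposal is correct and follows essentially the same route as the paper's proof: the same chain of Girsanov change-of-measure identities (between $\mathbb{P}$, $\mathbb{Q}$, and $\mathbb{Q}^{-u}$) together with uniqueness of the strong solution to identify the law of $X^{\epsilon}$ under $\mathbb{Q}^{-u}$ with that of $\hat X^{\epsilon}$ under $\mathbb{P}$, only read from left to right rather than from right to left. Your closing remark on verifying that the exponential densities are true martingales (Novikov or localization) is a legitimate point that the paper's proof silently assumes.
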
 \begin{proof} Since \begin{equation*} \begin{split} dX^\varepsilon(s) &= b\big(X^\varepsilon(s), s\big)\,ds + \sqrt{\varepsilon}\,\sigma\big(X^\varepsilon(s), s\big)\,dW_{s}\\ & = \left(b\big(X^\varepsilon(s), s\big) -\sigma(X^{\epsilon}_{s} ,s)u_{s}(X^{\epsilon}_{s} )\right) \,ds + \sqrt{\varepsilon}\,\sigma\big(X^\varepsilon(s), s\big)\,dB^{-u}_{s}. \end{split} \end{equation*} Based on the uniqueness of the strong solution, the distribution of $X^{\epsilon}_{s}$ under $B^{-u}_{s}$ is the same distribution as $\hat{X}^{\epsilon}_{s}$ under $W_{s}$. Therefore, we have \begin{equation} \begin{split} \mathbb{E}&_{\mathbb{P}^{\epsilon}}[\exp(-\frac{2}{\epsilon}g(\hat{X}^{\epsilon}_{T})+\frac{1}{\epsilon}\int_{t}^{T}|u_{s}(\hat{X}^{\epsilon}_{s})|^{2}ds)|\hat{X}^{\epsilon}_{t} = x] \\ & = \mathbb{E}_{\mathbb{Q}^{-u}}\left[\exp(-\frac{2}{\epsilon}g(X^{\epsilon}_{T})+\frac{1}{\epsilon}\int_{t}^{T}|u_{s}(X^{\epsilon}_{s})|^{2}ds)|X^{\epsilon}_{t} = x\right]\\ & = \mathbb{E}_{\mathbb{P}^{\epsilon}}\left[\left(\exp(-\frac{2}{\epsilon}g(X^{\epsilon}_{T})+\frac{1}{\epsilon}\int_{t}^{T}|u_{s}(X^{\epsilon}_{s})|^{2}ds)\right)\frac{d\mathbb{Q}^{-u}}{d\mathbb{P}^{\epsilon}}|X^{\epsilon}_{t} = x\right]\\ & = \mathbb{E}_{\mathbb{P}^{\epsilon}}\left[\left(\exp(-\frac{2}{\epsilon}g(X^{\epsilon}_{T})+\frac{1}{2\epsilon}\int_{t}^{T}|u_{s}(X^{\epsilon}_{s})|^{2}ds - \frac{1}{\sqrt{\epsilon}}\int_{t}^{T}u_{s}(X^{\epsilon}_{s})dW_{s}\right)|X^{\epsilon}_{t} = x\right]\\ & = \mathbb{E}_{\mathbb{P}^{\epsilon}}\left[\exp(-\frac{2}{\epsilon}g(X^{\epsilon}_{T}))\frac{d\mathbb{P}^{\epsilon}}{d\mathbb{Q}}|X^{\epsilon}_{t} = x\right]\\ & = \mathbb{E}_{\mathbb{Q}}\left[\exp(-\frac{2}{\epsilon}g(X^{\epsilon}_{T}))\left(\frac{d\mathbb{P}^{\epsilon}}{d\mathbb{Q}}\right)^{2}|X^{\epsilon}_{t} = x\right], \end{split} \end{equation} which finishes the proof. \end{proof}

Using lemma 4.1, the original problem is transformed into a stochastic optimal control problem
\begin{equation}
    V(x, t) = \inf_{u\in \mathcal{U}}\mathbb{E}_{\mathbb{P}^{\epsilon}}\!\left[\exp\!\left(-\frac{2}{\epsilon}g(\hat{X}^{\epsilon}_{T})+\frac{1}{\epsilon}\int_{t}^{T}|u_{s}(\hat{X}^{\epsilon}_{s})|^{2}\,ds\right)\Big|\,\hat{X}^{\epsilon}_{t} = x\right].
\end{equation}
Then, the dynamic programming is applied to derive the HJB equation. In detail, let 
\begin{equation}
    J(t, \hat{X}, u) = \mathbb{E}_{\mathbb{P}^{\epsilon}, x}\!\left[\exp\!\left(-\frac{2}{\epsilon}g(\hat{X}^{\epsilon}_{T})+\frac{1}{\epsilon}\int_{t}^{T}|u_{s}(\hat{X}^{\epsilon}_{s})|^{2}\,ds\right)\right].
\end{equation}
Since there is an optimal control $u^{*}$ based on (19) in $[t, T]$, we consider 
\begin{equation}
    \tilde{u}_{s} = \left\{\begin{array}{l}
         v_{s}, \quad s\in [t, t+h],  \\
         u^{*}_{s},\quad s\in (t+h, T], 
    \end{array}\right.
\end{equation}
where $v\in \mathcal{U}$. Then, we can derive the so-called Bellman equality. In detail, we have 
\begin{equation}
    \begin{split}
        J(t, \hat{X}, \tilde{u}) &= \mathbb{E}_{\mathbb{P}^{\epsilon}, x}\left[\exp\left(-\frac{2}{\epsilon}g(\hat{X}^{\epsilon}_{T})+ \frac{1}{\epsilon}\int_{t}^{t+h}|v_{s}(\hat{X}^{\epsilon}_{s})|^{2}ds + \frac{1}{\epsilon}\int_{t+h}^{T}|u^{*}_{s}(\hat{X}^{\epsilon}_{s})|^{2}ds\right)\right]\\ 
        & = \mathbb{E}_{\mathbb{P}^{\epsilon}, x}\left[\exp\left(\frac{1}{\epsilon}\int_{t}^{t+h}|v_{s}(\hat{X}^{\epsilon}_{s})|^{2}ds\right)\exp\left(-\frac{2}{\epsilon}g(\hat{X}^{\epsilon}_{T}) + \frac{1}{\epsilon}\int_{t+h}^{T}|u^{*}_{s}(\hat{X}^{\epsilon}_{s})|^{2}ds\right)\right]\\ 
        & = \mathbb{E}_{\mathbb{P}^{\epsilon}, x}\left[\exp\left(\frac{1}{\epsilon}\int_{t}^{t+h}|v_{s}(\hat{X}^{\epsilon}_{s})|^{2}ds\right)\mathbb{E}\left[\exp\left(-\frac{2}{\epsilon}g(\hat{X}^{\epsilon}_{T}) + \frac{1}{\epsilon}\int_{t+h}^{T}|u^{*}_{s}(\hat{X}^{\epsilon}_{s})|^{2}ds\right)\Big|\,\hat{X}^{\epsilon}_{t+h}\right]\right]\\ 
        & = \mathbb{E}_{\mathbb{P}^{\epsilon}, x}\left[\exp\left(\frac{1}{\epsilon}\int_{t}^{t+h}|v_{s}(\hat{X}^{\epsilon}_{s})|^{2}ds\right)V(\hat{X}^{\epsilon}_{t+h}, t+h)\right].
    \end{split}
\end{equation}
Thus, we have 
\begin{equation}
    V(x, t) \leq \mathbb{E}_{\mathbb{P}^{\epsilon}, x}\left[\exp\left(\frac{1}{\epsilon}\int_{t}^{t+h}|v_{s}(\hat{X}^{\epsilon}_{s})|^{2}ds\right)V(\hat{X}^{\epsilon}_{t+h}, t+h)\right].
\end{equation}
Similarly, if we take $v = u^{*}$, we can still get 
\begin{equation}
    V(x, t) = \mathbb{E}_{\mathbb{P}^{\epsilon}, x}\left[\exp\left(\frac{1}{\epsilon}\int_{t}^{t+h}|u^{*}_{s}(\hat{X}^{\epsilon}_{s})|^{2}ds\right)V(\hat{X}^{\epsilon}_{t+h}, t+h)\right].
\end{equation}
Now let $\phi_{s} = \psi_{s}V(\hat{X}^{\epsilon}_{s}, s)$, where $\psi_{s} = \exp\left(\frac{1}{\epsilon}\int_{t}^{s}|v_{r}(\hat{X}^{\epsilon}_{r})|^{2}dr\right)$. It is easy to observe that $\phi_{t} = V(x, t)$ and $\phi_{t} = \mathbb{E}_{\mathbb{P}^{\epsilon}, x}[\phi_{t+h}]$. Using Ito's formula, we can get
\begin{equation}
    d\phi_{s} = V(\hat{X}^{\epsilon}_{s}, s)d\psi_{s} + \psi_{s}dV(\hat{X}^{\epsilon}_{s}, s).
\end{equation}
Moreover, for $\psi_{s}$, we have
\begin{equation}
    d\psi_{s} = \frac{1}{\epsilon}|v_{s}|^{2}\psi_{s}ds,
\end{equation}
and for $V$, based on (24), we have $d\langle\hat{X}^{\epsilon}, \hat{X}^{\epsilon}\rangle_{s} = \epsilon \sigma^{2}(\hat{X}^{\epsilon}_{s}, s)ds$, and thus 
\begin{equation}
\begin{split}
    dV(\hat{X}^{\epsilon}_{s}, s) &= \partial_{s}Vds + \frac{1}{2}\partial_{xx}Vd\langle\hat{X}^{\epsilon}, \hat{X}^{\epsilon}\rangle_{s} + \partial_{x}V d\hat{X}^{\epsilon}_{s}\\ 
    & = \partial_{s}Vds + \frac{\epsilon}{2}\sigma^{2} \partial_{xx}V ds + \partial_{x}V(b - \sigma v)ds + \sqrt{\epsilon}\sigma \partial_{x}VdW_{s}\\ 
    &= \mathcal{A}^{s}ds + \sqrt{\epsilon}\sigma \partial_{x}VdW_{s},
\end{split}
\end{equation}
where $\mathcal{A}^{s} = \partial_{s}V + \frac{\epsilon}{2}\sigma^{2}\partial_{xx}V + \partial_{x}V(b-\sigma v)$.
Combining $(33)$ and (34), we can get
\begin{equation}
    d\phi_{s} = \frac{1}{\epsilon}|v_{s}|^{2}\phi_{s}ds + \psi_{s}\mathcal{A}^{s}ds + \sqrt{\epsilon}\sigma \psi_{s} \partial_{x}VdW_{s},
\end{equation}
which equals to 
\begin{equation}
    \phi_{t+h} - \phi_{t} = \int_{t}^{t+h}\left(\frac{1}{\epsilon}|v_{s}|^{2}\phi_{s} + \psi_{s}\mathcal{A}^{s}\right)ds + \int_{t}^{t+h} \sqrt{\epsilon}\sigma \psi_{s} \partial_{x}VdW_{s}.
\end{equation}
Let $h\to 0$ and take expectation, based on the continuity of the sample path, we can get
\begin{equation}
    \begin{split}
        \lim_{h\to 0^{+}}h^{-1}\mathbb{E}_{\mathbb{P}^{\epsilon}, x}\left[\phi_{t+h}-\phi_{t}\right] &=  \lim_{h\to 0^{+}}h^{-1}\mathbb{E}_{\mathbb{P}^{\epsilon}, x}\left[\int_{t}^{t+h}\left(\frac{1}{\epsilon}|v_{s}|^{2}\phi_{s} + \psi_{s}\mathcal{A}^{s}\right)ds\right]\\ 
        & = \frac{1}{\epsilon}|v_{t}|^{2}\phi_{t} + \psi_{t}\mathcal{A}^{t} \geq 0.
    \end{split}
\end{equation}
Moreover, based on (31), we actually have 
\begin{equation}
    \frac{1}{\epsilon}|u_{t}^{*}|^{2}\phi_{t} + \psi_{t}\mathcal{A}^{t} = \inf_{v\in \mathcal{U}}\;(\frac{1}{\epsilon}|v_{t}|^{2}\phi_{t} + \psi_{t}\mathcal{A}^{t}) = 0,
\end{equation}
which leads to the following HJB equation as 
\begin{equation}
    \inf_{u\in \mathcal{U}}\left(\frac{1}{\epsilon}|u|^{2}V + \partial_{t}V + \frac{\epsilon}{2}\sigma^{2}\partial_{xx}V + (b-\sigma u)\partial_{x}V \right) = 0.
\end{equation}
Thus, we can derive the optimal control is actually
\begin{equation}
    u^{*}(x, t) = \frac{\epsilon\sigma(x,t)\partial_{x}V(x,t)}{2V(x, t)}.
\end{equation}
Then, the original PDE is transformed into 
\begin{equation}
    \left\{
    \begin{array}{l}
         \partial_{t}V(x, t) - \frac{\epsilon(\sigma(x,t)\partial_{x}V(x,t))^{2}}{4V(x,t)} + \frac{\epsilon}{2}\sigma(x,t)^{2}\partial_{xx}V(x,t) + b(x,t)\partial_{x}V(x,t) = 0,  \\
         V(x, T) = \exp(-\frac{2}{\epsilon}g(x)). 
    \end{array}
    \right.
\end{equation}
Next, take the logarithm transform $W^{\epsilon}(x,t) = -\frac{\epsilon}{2}\log V(x,t)$ \cite{hult2024deep}, where 
    \begin{align*}
\partial_t V(t,x) &= -\frac{2}{\varepsilon} V(t,x)\,\partial_t W^\varepsilon(t,x),\\
\partial_x V(t,x) &= -\frac{2}{\varepsilon} V(t,x)\,\partial_x W^\varepsilon(t,x),\\
\partial_{xx} V(t,x) &= -\frac{2}{\varepsilon} V(t,x)\,\partial_{xx} W^\varepsilon(t,x)
  + \frac{4}{\varepsilon^2} V(t,x)\bigl(\partial_x W^\varepsilon(t,x)\bigr)^2,
\end{align*}
the original PDE is further simplified to be 
\begin{equation}
    \left\{
    \begin{array}{l}
         \partial_{t}W^{\epsilon}(x, t) - \frac{(\sigma(x,t)\partial_{x}W^{\epsilon}(x,t))^{2}}{2} + \frac{\epsilon}{2}\sigma^{2}(x,t)\partial_{xx}W^{\epsilon}(x,t) + b(x,t)\partial_{x}W^{\epsilon}(x,t) = 0,  \\
         W^{\epsilon}(x, T) = g(x). 
    \end{array}
    \right.
\end{equation}
Then the optimal control can be represented as 
\begin{equation}
    u^{*}(x,t) = -\sigma(x,t)\partial_{x}W^{\epsilon}(x,t).
\end{equation}
This kind of HJB equation is very similar to the Kolmogorov backward equation. Other ways to derive this equation include the Boué--Dupuis \cite{boue1998variational} formula to derive a variational representation and then solve a min-max problem using dynamic programming. Numerically solving this equation is not easy, especially in high dimensional cases. However, the solution of this PDE can also be represented using the nonlinear Feynman--Kac formula, which involves a set of forward-backward SDEs with $X_{s}, Y_{s}, Z_{s}$ \cite{hult2024deep}. Actually, we can also observe the optimal control is connected with the Doob's $h$ transform \cite{zhang2022koopman} with a different formulation.

When $\epsilon\to 0$, it is clear that the diffusion term disappears. While next we can also show that the probability $\rho$ itself is a solution of such PDEs with different transformations. Let 
\begin{equation}
    \varphi(x, t) = \mathbb{E}_{\mathbb{P}^{\epsilon},x}\left[\exp\left(-\frac{1}{\epsilon}g(X_{T}^{\epsilon})\right)\right],
\end{equation}
then based on the Feynman--Kac formula, we can derive that $\varphi(x,t)$ satisfies the following PDE:
\begin{equation}
\left\{
\begin{array}{l}
    \partial_{t}\varphi(x, t) + b(x,t)\partial_{x}\varphi(x,t) + \frac{\epsilon}{2}\sigma^{2}(x,t)\partial_{xx}\varphi(x,t) = 0,\\
    \varphi(x, T) = \exp(-\frac{1}{\epsilon}g(x)).
\end{array}
\right.
\end{equation}
Using a similar transformation $W^{\epsilon}(x, t) = -\epsilon\log \varphi(x, t)$, we can get that 
\begin{equation}
    \left\{
    \begin{array}{l}
         \partial_{t}W^{\epsilon}(x, t) - \frac{(\sigma(x,t)\partial_{x}W^{\epsilon}(x,t))^{2}}{2} + \frac{\epsilon}{2}\sigma^{2}(x,t)\partial_{xx}W^{\epsilon}(x,t) + b(x,t)\partial_{x}W^{\epsilon}(x,t) = 0,  \\
         W^{\epsilon}(x, T) = g(x). 
    \end{array}
    \right.
\end{equation}
It is obvious that (46) and (42) are exactly the same equation. Therefore, we can now establish the connections between the estimator and second order moment and then establish the log-efficiency of the estimator with optimal control.

As $\epsilon\to 0$, $W^{\epsilon}(x,t)$ in (42) and (46) will both converge to the solution $W(x,t)$ with the following PDE:
\begin{equation}
    \left\{
    \begin{array}{l}
         \partial_{t}W(x, t) - \frac{(\sigma(x,t)\partial_{x}W(x,t))^{2}}{2}  + b(x,t)\partial_{x}W(x,t) = 0,  \\
         W(x, T) = g(x). 
    \end{array}
    \right.
\end{equation}
To this end, when we take the optimal control to construct the proposal distribution $\mathbb{Q}$, we can get 
\begin{equation}
\begin{split}
    &\lim_{\varepsilon \to 0} -\varepsilon 
    \log \mathbb{E}\bigl[e^{-\tfrac{1}{\varepsilon} g(X_{T}^\varepsilon)}\bigr] = W(x, 0) = \gamma_{1},\\ 
    &\lim_{\varepsilon \to 0} -\varepsilon 
    \log \mathbb{E}_{\mathbb{Q}}\bigl[e^{-\tfrac{2}{\varepsilon} g(\tilde{X}_{T}^\varepsilon)}Z^{2}(\tilde{X}^{\epsilon}_{T})\bigr] = 2W(x, 0) = 2\gamma_{1}.
\end{split}
\end{equation}
Therefore, we have 
\begin{equation}
    \limsup_{\epsilon\to 0}\ \epsilon \log R(\epsilon) = 0.
\end{equation}
Thus, the importance sampling estimator is log-efficient. Even though we could not make sure that the prefactors of the large deviation estimator match so that the estimator is error vanishing when $\epsilon \to 0$, this estimator is at least good enough to make accurate predictions when $\epsilon$ is not that small. 

\section{Cross entropy methods for importance sampling}
There are many methods for approximating the importance sampling estimator, i.e, the optimal control $u^{*}$. Firstly, we can directly solve the HJB equation (46) using numerical methods such as DG or finite difference methods. Secondly, we can first write the corresponding PDE as a system of forward backward SDEs and then solve the problem with a least squares Monte Carlo method \cite{hult2024deep}. In this manuscript, we would like to try the cross entropy method, which solves the problem in a variational way.  

In detail, the cross entropy methods seek to approximate the optimal proposal distribution (18) using a candidate distribution by minimizing some distance. In detail, we will typically choose to use the KL divergence to find the distribution numerically, which is equivalent to find the optimal control $u^{*}$. Thus, the problem becomes 
\begin{equation}
    \mathcal{Q}^{u} = \arg\min_{u\in \mathcal{U}} \mathbb{D}_{\text{KL}}[\mathcal{Q}^{*}\| \mathcal{Q}^{u}] = \arg \min_{u\in \mathcal{U}} \int_{\Omega}\log \frac{d\mathcal{Q^{*}}}{d\mathcal{Q}^{u}}\mathcal{Q}^{*}(d\omega).
\end{equation}
We can substitute (18) into the above equation to get
\begin{align}
D_{\mathrm{KL}}[Q^*\|Q^u]
&= \int_{\Omega} \log\frac{dQ^*}{dQ^u}\, Q^*(d\omega) \nonumber\\
&= \int_{\Omega} \Bigg(\log\frac{dQ^*}{dP^\varepsilon}-\log\frac{dQ^u}{dP^\varepsilon}\Bigg)\, Q^*(d\omega) \nonumber\\
&= \mathbb{E}_{Q^*}\!\left[\log L_T^* - \log L_T^u\right] \nonumber\\
&= \mathbb{E}_{P^\varepsilon}\!\left[ L_T^*\big(\log L_T^* - \log L_T^u\big)\right] \nonumber\\
&= \mathbb{E}_{P^\varepsilon}\!\left[ L_T^*\log L_T^*\right]
   - \mathbb{E}_{P^\varepsilon}\!\left[ L_T^*\log L_T^u\right],
\end{align}
where we used the Radon--Nikodym chain rule and the identity
$\mathbb{E}_{Q^*}[\varphi]=\mathbb{E}_{P^\varepsilon}[L_T^*\varphi]$. Moreover, $L_{T}^{u}$ is the density function given by Girsanov's theorem. Therefore, since $\mathbb{E}_{P^\varepsilon}[ L_T^*\log L_T^*]$ does not depend on $u$, we obtain
\begin{align}
Q^u 
\in \arg\min_{u\in \mathcal{U}} D_{\mathrm{KL}}[Q^*\|Q^u]
\quad\Longleftrightarrow\quad
u \in \arg\max_{u\in \mathcal{U}}\ \mathbb{E}_{P^\varepsilon}\!\left[ L_T^*\log L_T^u\right].
\end{align}

To make the optimization problem well-defined and numerically tractable, we restrict to a parametric
family of admissible (Markov) controls
\[
u_t = u_\theta(X_t,t),\qquad \theta\in\Theta\subset\mathbb{R}^m,
\]
and denote by $Q^\theta:=Q^{u_\theta}$ the corresponding path measure.
Consider the baseline diffusion under $P^\varepsilon$ where $\sigma = 1$,
\begin{equation}\label{eq:base-sde}
dX_t = b(X_t,t)\,dt + \sqrt{\varepsilon}\,dW_t,
\end{equation}
and the controlled diffusion under $Q^\theta$,
\begin{equation}\label{eq:ctrl-sde}
dX_t = \big(b(X_t,t)-u_\theta(X_t,t)\big)\,dt + \sqrt{\varepsilon}\,dW_t^\theta.
\end{equation}
The Radon--Nikodym derivative $L_T^\theta=\frac{dQ^\theta}{dP^\varepsilon}$ can be expressed without
explicitly using $dW$ by substituting $dW_t=\big(dX_t-b(X_t,t)\,dt\big)/\sqrt{\varepsilon}$ in the
usual Girsanov formula. This yields the equivalent representation
\begin{equation}\label{eq:girsanov-dx}
\log L_T^\theta
=
-\frac{1}{\varepsilon}\int_0^T u_\theta(X_t,t)^\top dX_t
+\frac{1}{\varepsilon}\int_0^T u_\theta(X_t,t)^\top b(X_t,t)\,dt
-\frac{1}{2\varepsilon}\int_0^T \|u_\theta(X_t,t)\|^2\,dt .
\end{equation}
Since direct sampling under $P^\varepsilon$ is inefficient for rare events, we evaluate the objective
in (52) by importance sampling under a current proposal $Q^{\theta^{(k)}}$.
Using
\[
\mathbb{E}_{P^\varepsilon}[F]
=
\mathbb{E}_{Q^{\theta^{(k)}}}\!\left[
F\,\frac{dP^\varepsilon}{dQ^{\theta^{(k)}}}
\right]
=
\mathbb{E}_{Q^{\theta^{(k)}}}\!\left[
F\,\frac{1}{L_T^{\theta^{(k)}}}
\right],
\]
the maximization in (52) becomes
\begin{equation}\label{eq:CE-objective-Qk}
\theta^{(k+1)}
\in
\arg\max_{\theta\in\Theta}\;
\mathbb{E}_{Q^{\theta^{(k)}}}\!\left[
\frac{L_T^*}{L_T^{\theta^{(k)}}}
\,\log L_T^\theta
\right],
\end{equation}
where $L_T^*$ is given by (18), and $L_T^\theta$ is given by
\eqref{eq:girsanov-dx}.
In practice we approximate \eqref{eq:CE-objective-Qk} with $N$ i.i.d.\ trajectories
$\omega_i=\{X_t^{(i)}\}_{t\in[0,T]}$ sampled from $Q^{\theta^{(k)}}$ and use self-normalized weights
\begin{equation}\label{eq:sn-weights}
\tilde w_i^{(k)}=\frac{w_i^{(k)}}{\sum_{n=1}^N w_n^{(k)}},
\qquad
w_i^{(k)}=\frac{L_T^*(\omega_i)}{L_T^{\theta^{(k)}}(\omega_i)}.
\end{equation}
Then the CE update is the weighted maximum-likelihood problem
\begin{equation}\label{eq:CE-WMLE}
\theta^{(k+1)}
\in
\arg\max_{\theta\in\Theta}
\sum_{i=1}^N \tilde w_i^{(k)}\,\log L_T^\theta(\omega_i).
\end{equation}
Recall from (43)  that the optimal control admits a
feedback form linked to the value function, namely
\begin{equation}\label{eq:u-star-W-link}
u^{*}(x,t) = -\,\sigma(x,t)^{\top}\nabla_x W^\varepsilon(x,t).
\end{equation}
This observation suggests that rather than parameterizing $u$ directly, we may parameterize the value
function and obtain a structured control through differentiation.

More precisely, assume that $W^\varepsilon$ can be approximated by a linear expansion in a chosen
dictionary $\{\varphi_j\}_{j=1}^m$:
\begin{equation}\label{eq:W-approx}
W^\varepsilon(x,t)\approx W_\theta(x,t):=\sum_{j=1}^{m}\theta_j\,\varphi_j(x,t),
\end{equation}
where $\theta\in\mathbb{R}^m$ are unknown coefficients.
Combining \eqref{eq:W-approx} with the relation \eqref{eq:u-star-W-link} yields the induced parametric
control family
\begin{equation}\label{eq:u-from-W}
u_\theta(x,t)
:= -\,\sigma(x,t)^{\top}\nabla_x W_\theta(x,t)
= -\,\sigma(x,t)^{\top}\sum_{j=1}^m \theta_j\,\nabla_x\varphi_j(x,t).
\end{equation}
Therefore, $u_\theta$ is linear in the coefficients $\theta$.
Defining the vector-valued basis functions
\begin{equation}\label{eq:psi-def}
\psi_j(x,t) := -\,\sigma(x,t)^{\top}\nabla_x\varphi_j(x,t)\in\mathbb{R}^d,
\end{equation}
we arrive at the convenient linear parameterization
\begin{equation}\label{eq:linear-control}
u_\theta(x,t)=\sum_{j=1}^m \theta_j\,\psi_j(x,t),
\end{equation}
where $\{\psi_j\}_{j=1}^m$ are prescribed (vector-valued) basis functions determined by the chosen
dictionary $\{\varphi_j\}_{j=1}^m$. Afterwards, substituting \eqref{eq:linear-control} into \eqref{eq:girsanov-dx} shows that $\log L_T^\theta$ is a
concave quadratic function of $\theta$:
\begin{align}
\log L_T^\theta
&=
-\frac{1}{\varepsilon}\sum_{j=1}^m \theta_j
\left(
\int_0^T \psi_j(X_t,t)^\top dX_t
-
\int_0^T \psi_j(X_t,t)^\top b(X_t,t)\,dt
\right)\nonumber\\
&\quad
-\frac{1}{2\varepsilon}\sum_{j,\ell=1}^m \theta_j\theta_\ell
\left(
\int_0^T \psi_j(X_t,t)^\top \psi_\ell(X_t,t)\,dt
\right).
\label{eq:logL-quadratic-dx}
\end{align}
Therefore, the maximizer of \eqref{eq:CE-WMLE} solves the weighted normal equations
\begin{equation}\label{eq:normal-eq-dx}
A^{(k)}\,\theta^{(k+1)} = r^{(k)},
\end{equation}
with
\begin{align}
A^{(k)}_{j\ell}
&=
\sum_{i=1}^N \tilde w_i^{(k)}
\int_0^T \psi_j(X_t^{(i)},t)^\top \psi_\ell(X_t^{(i)},t)\,dt,
\label{eq:A-dx}\\
r^{(k)}_{j}
&=
\sum_{i=1}^N \tilde w_i^{(k)}
\left(
\int_0^T \psi_j(X_t^{(i)},t)^\top b(X_t^{(i)},t)\,dt
-
\int_0^T \psi_j(X_t^{(i)},t)^\top dX_t^{(i)}
\right).
\label{eq:r-dx}
\end{align}
In computations we use a small ridge regularization
$A^{(k)}\leftarrow A^{(k)}+\lambda_{\mathrm{ridge}}I$ to ensure numerical stability.
Let $t_n=n\Delta t$ ($n=0,\dots,M$; $M=T/\Delta t$) and denote $\Delta X_n=X_{n+1}-X_n$.
Simulating \eqref{eq:ctrl-sde} under the current proposal $Q^{\theta^{(k)}}$ yields
\[
X_{n+1}
=
X_n
+
\big(b(X_n,t_n)-u_{\theta^{(k)}}(X_n,t_n)\big)\Delta t
+
\sqrt{\varepsilon}\,\Delta B_n,
\qquad \Delta B_n\sim \mathcal N(0,\Delta t).
\]
The path functional $\log L_T^\theta$ in \eqref{eq:girsanov-dx} is approximated by left-endpoint sums:
\begin{align}
\log L_T^\theta
&\approx
-\frac{1}{\varepsilon}\sum_{n=0}^{M-1} u_\theta(X_n,t_n)^\top \Delta X_n
+\frac{1}{\varepsilon}\sum_{n=0}^{M-1} u_\theta(X_n,t_n)^\top b(X_n,t_n)\,\Delta t
-\frac{1}{2\varepsilon}\sum_{n=0}^{M-1}\|u_\theta(X_n,t_n)\|^2\,\Delta t.
\label{eq:logL-discrete-dx}
\end{align}
Similarly, the integrals in \eqref{eq:A-dx}--\eqref{eq:r-dx} are approximated by
\[
\int_0^T \psi_j^\top \psi_\ell\,dt
\approx
\sum_{n=0}^{M-1}\psi_j(X_n,t_n)^\top \psi_\ell(X_n,t_n)\,\Delta t,
\qquad
\int_0^T \psi_j^\top dX
\approx
\sum_{n=0}^{M-1}\psi_j(X_n,t_n)^\top \Delta X_n,
\]
and the CE iteration is implemented by repeatedly: (i) simulating trajectories under $Q^{\theta^{(k)}}$,
(ii) computing weights via \eqref{eq:sn-weights} using \eqref{eq:logL-discrete-dx},
(iii) assembling $A^{(k)}$ and $r^{(k)}$, and (iv) solving \eqref{eq:normal-eq-dx} for $\theta^{(k+1)}$.

\section{Numerical experiments}

In this section, we would like to consider the following SDE \cite{nusken2021solving}:
\begin{equation}
    dX_{t} = -\nabla V(X_{t})dt + \sqrt{\epsilon}dW_{t},
\end{equation}
where $V(x)$ is the double-well potential defined as 
\begin{equation}
    V(x) = \sum_{i=1}^{d}\kappa_{i}(x_{i}^{2} - 1)^{2}.
\end{equation}
And we would like to approximate the following quantity of interest:
\begin{equation}
    \rho = \mathbb{E}\!\left[\exp\!\left(-\frac{1}{\epsilon}g(X_{T})\right)\right],
\end{equation}
where $g$ is chosen to favor the \emph{right} well (consistent with the discussion below), e.g.
\begin{equation}
    g(x) = \sum_{i=1}^{d}\nu_{i}(x_{i}-1)^{2}.
\end{equation}
In this case, we consider $d = 1$ and $\kappa_{i} = \nu_{i} = 1$. The above SDE is solved by Euler--Maruyama scheme with a step size $\Delta t = 0.001$. We take $X_0=-1$ so that reaching the state near $x=1$ is rare when $\epsilon$ is small.
The control is approximated by 
\begin{equation}
    u(x) = \sum_{m=1}^{J}\theta_{m}\nabla\phi_{m}(x),
\end{equation}
where $\phi_{i}$ is the corresponding RBF kernel defined by 
\begin{equation}
    \phi_{m}(x) = \exp(-\frac{\|x - c_{m}\|^{2}}{2r_{m}^{2}}),
\end{equation}
where $c_{m}, r_{m}$ are the corresponding centers and standard deviations. In practice, we use 17 basis functions to approximate the control, where the centers are specified as $c_{m} = -1.5 + 0.1m, m = 1, \ldots, J$. Moreover, we set $r_{m} = 0.5$ for all basis functions. The number of trajectories to solve the equation is taken to be $N = 30000$. The noise level $\epsilon = 0.05$. In this case, we plot the trajectories without control in Fig.\ref{fig:uncontrolled}. It is clear that without control, it is very rare for the trajectories to reach the state $x = 1$. Thus, the estimated quantity of interest could be highly inaccurate. 

\begin{figure}[htbp]
    \centering
    \includegraphics[width=0.5\linewidth]{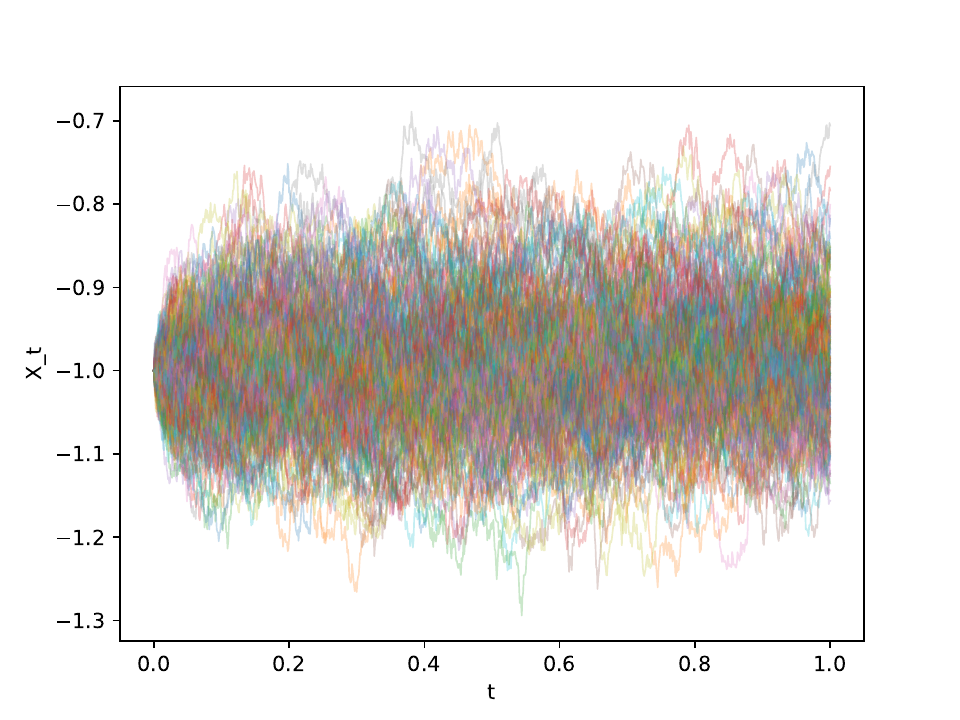}
    \caption{The trajectories with uncontrolled SDE.}
    \label{fig:uncontrolled}
\end{figure}

Next, we also plot the trajectories for the controlled SDEs in Fig.\ref{fig:controlled}. In this case, we observe that most of the trajectories will reach $x = 1$. Thus, we can approximate the quantity with high precision. 
\begin{figure}[t]
    \centering
\includegraphics[width=0.5\linewidth]{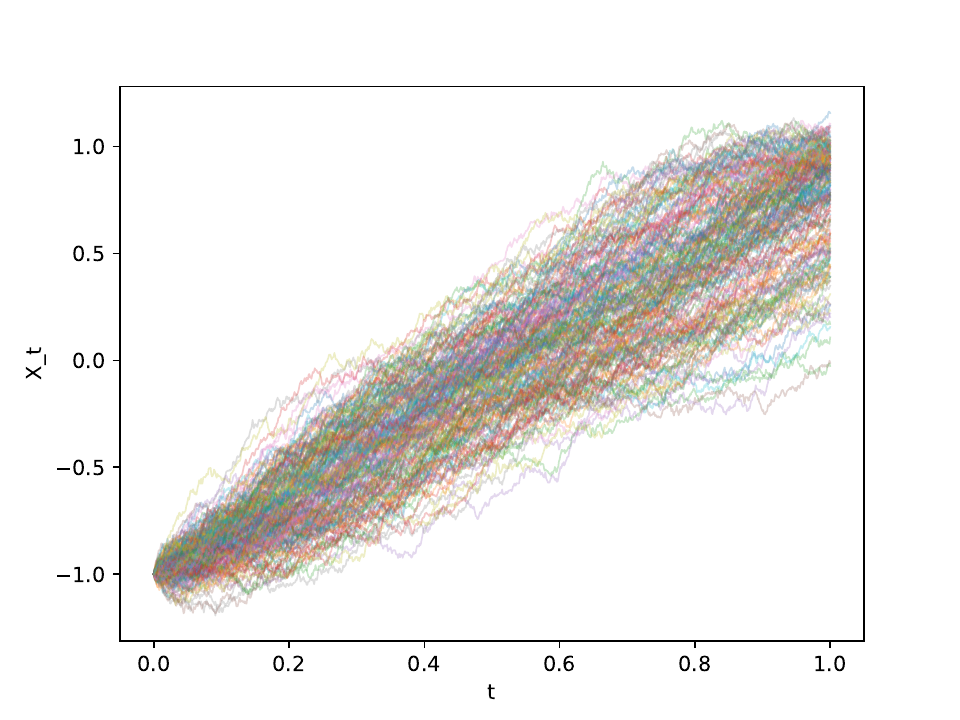}
    \caption{The trajectories for the SDE with learned control.}
    \label{fig:controlled}
\end{figure}
Finally, we plot the modified potential function and the control in Fig.\ref{fig:learned_control} to indicate that it is reasonable why changing the potential could lead to more efficient approximation. This is because by changing the potential function, more trajectories go to the rare areas and the original quantity of interest is no longer a small quantity under the new measure. Finally, the estimated $\hat{\rho}$ is $8.16e-12$, which is much more accurate than the MC estimator. 
\begin{figure}[htbp]
    \centering
    \includegraphics[width=0.5\linewidth]{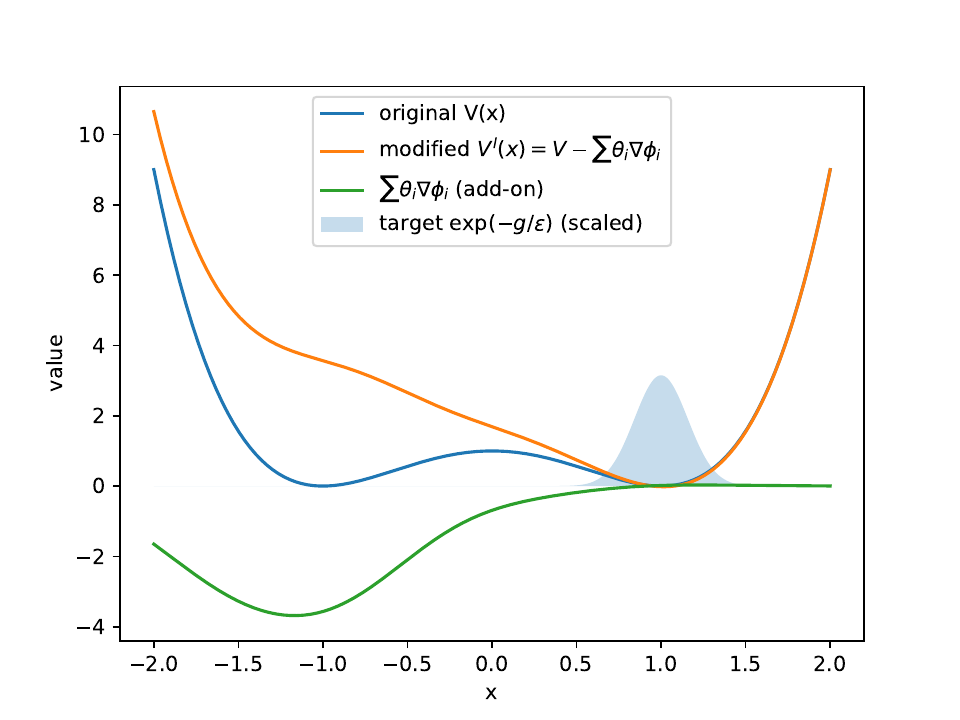}
    \caption{The original potential, modified potential and learned control.}
    \label{fig:learned_control}
\end{figure}

\section{Conclusion}
In this manuscript, we establish the log-efficiency of the importance sampling estimator by leveraging the solution of the associated HJB equation. To approximate the optimal change of measure—equivalently, to approximate the optimal control—we employ the cross-entropy method, which minimizes the Kullback–Leibler divergence by parameterizing the control as a linear combination of prescribed basis functions. We then present a representative numerical example to demonstrate the effectiveness of the proposed approach.

Despite these advantages, several limitations remain. First, in high-dimensional settings the method can suffer from the curse of dimensionality, resulting in prohibitive computational costs. Second, selecting suitable basis functions is challenging in the absence of reliable prior knowledge. Third, the optimal control is generally time-dependent, which complicates the parameterization; moreover, theoretical convergence guarantees for the resulting procedure are not yet established.

\bibliographystyle{plain}
\bibliography{reference}

\end{document}